\documentclass[12pt]{amsart}
\usepackage{amsmath}
\usepackage[all]{xy}
\usepackage{amssymb}
\usepackage{amsthm}
\usepackage{hyperref}
\usepackage{todonotes}
\hypersetup{colorlinks=true,linkcolor=blue,citecolor=magenta}
\usepackage{mathtools}
\usepackage{amscd, enumerate}
\usepackage{verbatim}
\usepackage{eurosym}
\usepackage{float}
\usepackage{color}
\usepackage{physics}
\usepackage{cases}
\usepackage{bm}
\usepackage{dcolumn}
\usepackage{makecell}
\usepackage[mathscr]{eucal}
\usepackage{bbm}
\usepackage{comment}
\usepackage{adjustbox}
\usepackage{caption, tabu, multicol}
\usepackage[textheight=8.77in, textwidth=6.8in]{geometry}

\usepackage[]{breqn}

\newtheorem{thm}{Theorem}[subsection]

\newtheorem*{thm*}{Theorem}
\newtheorem*{conj*}{Conjecture}

\newcommand{\Z}{\mathbb{Z}}

\newcommand{\sigodd}{\sigma_{\mathrm{odd}}}

\makeatletter
\DeclarePairedDelimiterX{\pmodx}[1]{(}{)}{{\operator@font mod}\mkern6mu#1}
\renewcommand{\pmod}{%
  \allowbreak
  \if@display\mkern18mu\else\mkern8mu\fi
  \pmodx
}
\makeatother

\begin{document}
\title{A Proof of Merca's Conjectures on Sums of Odd Divisor Functions}

\author[K. Lakein, A. Larsen]{Kaya Lakein and Anne Larsen}
\address{Department of Mathematics, Stanford University, Stanford, CA 94305}
\email[K. Lakein]{epi2@stanford.edu}
\address{Department of Mathematics, Harvard University, Cambridge, MA 02138}
\email[A. Larsen]{larsen@college.harvard.edu}

\begin{abstract}
In a recent paper, Merca posed three conjectures on congruences for specific convolutions of a sum of odd divisor functions with a generating function for generalized $m$-gonal numbers. Extending Merca's work, we complete the proof of these conjectures.
\end{abstract}

\maketitle

Euler's partition function $p(n)$ is defined by the number of partitions of any nonnegative integer $n$, and its generating function is given by
\begin{align*}
\sum_{n=0}^\infty p(n)q^n = \prod_{k=1}^\infty \frac{1}{1-q^k}, \quad \abs{q} < 1.
\end{align*}
The properties of the function $p(n)$, such as its asymptotic behavior and its parity, have been an object of study for a long time. For instance, Ballantine and Merca \cite{BM} recently made a conjecture on when $$\sum_{ak+1 \text{ is a square}}p(n-k)$$ is odd,
which was proved by Hong and Zhang \cite{HZ}.

The function $p(n)$ is linked to the divisor function 
$$\sigma(n) := \sum_{d \mid n}d,$$ whose generating function is given by 
\begin{align*}
\sum_{n=1}^\infty \sigma(n)q^n = \sum_{n=1}^\infty \frac{nq^n}{1-q^n}.
\end{align*}
In particular, $p(n)$ and $\sigma(n)$ satisfy the following convolution identities, which differ only in the values of $p(0)$ and $\sigma(0)$:
\begin{align*}
\sum_{k=-\infty}^\infty (-1)^kp(n-P_5(k)) &= \delta_{0,n}, \text{ with } p(0) = 1, \\
\sum_{k=-\infty}^\infty (-1)^k\sigma(n-P_5(k)) &= 0, \text{ with } \sigma(0) \text{ replaced by } n,
\end{align*}
where $\delta_{ij}$ is the Kronecker delta, and $P_m(k)$ is the $k$th generalized $m$-gonal number
\begin{align}\label{eq: m-gonal}
    P_m(k) := \left(\frac m2 - 1\right)k^2 - \left(\frac m2 - 2 \right) k.
\end{align}

Motivated by these identities as well as the fact that the divisor functions $\sigma(n)$ and 
\begin{equation*}
    \sigodd(n) := \sum_{\substack{d \mid n \\ d \text{ odd}}} d,
\end{equation*}
where $\sigodd(n) :=0$ for $n \le 0$, have the same parity, Merca recently studied the relationship between $\sigodd(n)$ and the generalized $m$-gonal numbers.
More specifically, he investigated for which positive integers $m$ the following congruences hold for all $n \in \Z^+$:
\begin{align}
\label{eq: conj1}
\sum_{k=-\infty}^\infty \sigodd(n-P_m(k)) &\equiv \begin{dcases}
n \pmod 2 & \text{if } n= P_m(j), \, j \in \Z, \\
0 \pmod 2 & \text{otherwise}, \\
\end{dcases}\\
\label{eq: conj2}
\sum_{k=-\infty}^\infty \sigodd(n-P_5(k)) &\equiv \begin{dcases}
n \pmod m & \text{if } n= P_5(j), \, j \in \Z, \\
0 \pmod m & \text{otherwise},
\end{dcases} \\
\label{eq: conj3}
\sum_{k=-\infty}^\infty (-1)^{P_3(-k)}\sigodd(n-P_5(k)) &\equiv \begin{dcases}
(-1)^{P_3(-j)}\cdot n \pmod m & \text{if } n= P_5(j), \, j \in \Z, \\
0 \pmod m & \text{otherwise}.
\end{dcases}
\end{align}

\noindent In particular, Merca posed the following conjectures:
\begin{conj*}
The following are true:
\begin{enumerate}[(i)]
    \item The congruence \eqref{eq: conj1} holds for all $n \in \Z^+$ if and only if $m \in \{5,6\}$. 
    \item The congruence \eqref{eq: conj2} holds for all $n \in \Z^+$ if and only if $m \in \{2,3,6\}$. 
    \item The congruence \eqref{eq: conj3} holds for all $n \in \Z^+$ if and only if $m \in \{2,4\}$. 
\end{enumerate}
\end{conj*}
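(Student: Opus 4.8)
The plan is to convert each of \eqref{eq: conj1}, \eqref{eq: conj2}, \eqref{eq: conj3} into a congruence between Fourier coefficients of eta quotients. Put $S(q):=\sum_{n\ge1}\sigodd(n)q^n$, $\Theta_m(q):=\sum_{k\in\Z}q^{P_m(k)}$, and $\widetilde\Theta_5(q):=\sum_{k\in\Z}(-1)^{P_3(-k)}q^{P_5(k)}$. Then $\sum_k\sigodd(n-P_m(k))$ is the $q^n$-coefficient of $S(q)\Theta_m(q)$. Since $P_5$ and $P_6$ are injective on $\Z$ (and the generalized hexagonal numbers are exactly the triangular numbers), the $q^n$-coefficient of $q\frac{d}{dq}\Theta_m(q)=\sum_kP_m(k)q^{P_m(k)}$ equals $n$ when $n$ is a generalized $m$-gonal number and $0$ otherwise, for $m\in\{5,6\}$; likewise for $q\frac{d}{dq}\widetilde\Theta_5(q)$. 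Thus \eqref{eq: conj1} with $m\in\{5,6\}$ asserts $S\,\Theta_m\equiv q\frac{d}{dq}\Theta_m\pmod 2$, while \eqref{eq: conj2} and \eqref{eq: conj3} assert $S\,\Theta_5\equiv q\frac{d}{dq}\Theta_5\pmod m$ and $S\,\widetilde\Theta_5\equiv q\frac{d}{dq}\widetilde\Theta_5\pmod m$. (We may assume $m\ge3$ in \eqref{eq: conj1}, since for $m\le2$ the sum is not finite.)

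For the forward directions I will use eta-quotient expansions in $E(q):=\prod_{k\ge1}(1-q^k)$. By the Jacobi triple product, $\Theta_5(q)=E(q^2)E(q^3)^2/(E(q)E(q^6))$, $\Theta_6(q)=E(q^2)^2/E(q)$, and $\widetilde\Theta_5(q)=\prod_{a\ \text{odd}}(1+q^a)\prod_{a\ \text{even}}(1-q^a)=E(q^2)^3/(E(q)E(q^4))$. Using the standard identity $q\frac{d}{dq}\log E(q^r)=-r\sum_{n\ge1}\sigma(n)q^{rn}$, the series $q\frac{d}{dq}\Theta\big/\Theta$ becomes an explicit $\Z$-combination of the $\sum_n\sigma(n)q^{rn}$; denoting its $N$-th coefficient by $g(N)$, one finds $g(N)=\sigma(N)-2\sigma(N/2)-6\sigma(N/3)+6\sigma(N/6)$ for $\Theta_5$, $g(N)=\sigma(N)-4\sigma(N/2)$ for $\Theta_6$, and $g(N)=\sigma(N)-6\sigma(N/2)+4\sigma(N/4)$ for $\widetilde\Theta_5$ (with $\sigma(x)=0$ unless $x$ is a positive integer). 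Since $\sigodd(N)=\sigma(N)-2\sigma(N/2)$, in these three cases $g(N)-\sigodd(N)$ is divisible by $6$, by $2$, and by $4$. Finally, reading off the $q^n$-coefficient in the tautology $q\frac{d}{dq}\Theta=\Theta\cdot\big(q\frac{d}{dq}\Theta\big/\Theta\big)$ yields an exact recurrence $\sum_k w_k\,g(n-P(k))=w_{k_0}n$ (if $n=P(k_0)$, else $0$), where $w_k$ is the weight and $P\in\{P_5,P_6\}$; replacing $g$ by $\sigodd$, legitimate modulo the corresponding integer, gives \eqref{eq: conj1} for $m\in\{5,6\}$, \eqref{eq: conj2} for $m\in\{2,3,6\}$, and \eqref{eq: conj3} for $m\in\{2,4\}$. (The mod-$2$ statements alternatively follow from the pentagonal recurrence for $\sigma$ together with $\sigodd\equiv\sigma\pmod2$.)

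For the converse directions I will rule out all other $m$ using small values of $n$. For \eqref{eq: conj1}: from $P_3(k)=P_3(-1-k)$ one has $\Theta_3(q)=2\sum_{k\ge0}q^{P_3(k)}$, so its left side is always even while the right side is odd at $n=3=P_3(2)$; for $m=4$ the congruence fails at $n=2$, which is not a square; and for $m\ge7$ the only generalized $m$-gonal numbers below $m-3$ are $0$ and $1$, so at $n=3$ the left side is $\sigodd(3)+\sigodd(2)=5$ although $3$ is not generalized $m$-gonal. For \eqref{eq: conj2} and \eqref{eq: conj3}, let $A(n)$ (resp.\ $C(n)$) be the difference between the left-hand side and the integer on the right-hand side of \eqref{eq: conj2} (resp.\ \eqref{eq: conj3}) before reduction mod $m$; the forward direction gives $6\mid A(n)$ and $4\mid C(n)$ for all $n$, while one computes $A(3)=6$ and $C(2)=4$. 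Hence $\gcd_n A(n)=6$ and $\gcd_n C(n)=4$, so the integers $m>1$ for which \eqref{eq: conj2}, resp.\ \eqref{eq: conj3}, holds are exactly the divisors of $6$, resp.\ of $4$, exceeding $1$.

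The heart of the matter — and the main obstacle — is producing these eta-quotient identities, especially absorbing the quadratic twist $(-1)^{P_3(-k)}=(-1)^{\binom{k}{2}}$ of \eqref{eq: conj3} into a Ramanujan theta function and simplifying the resulting product to $E(q^2)^3/(E(q)E(q^4))$; one must also check that the congruence $g(N)\equiv\sigodd(N)$ holds modulo exactly $6$, $2$, $4$ in the three cases (for instance $g(N)-\sigodd(N)=6\big(\sigma(N/6)-\sigma(N/3)\big)$ for $\Theta_5$), which, together with $A(3)=6$ and $C(2)=4$, is what pins the answer sets to exactly $\{2,3,6\}$ and $\{2,4\}$. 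The remaining steps are routine manipulations of $q$-series.
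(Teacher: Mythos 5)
Your proposal is correct, but it is organized quite differently from the paper. The paper treats the ``if'' directions as known, citing Merca's theorems, and only supplies the ``only if'' part; you instead reprove the ``if'' directions from scratch by writing the theta series $\Theta_5$, $\Theta_6$, $\widetilde\Theta_5$ as eta quotients via the Jacobi triple product (your three product identities are indeed correct --- the twisted one is $f(q,-q^2)$ in Ramanujan's notation, and the JTP factorization with base $-q^3$ resorts into $\prod_{a\,\mathrm{odd}}(1+q^a)\prod_{a\,\mathrm{even}}(1-q^a)$ as you claim) and then taking logarithmic derivatives, which gives exact integer recurrences for $g(N)=\sigma(N)-2\sigma(N/2)-6\sigma(N/3)+6\sigma(N/6)$, etc., and hence the divisibilities $6\mid A(n)$ and $4\mid C(n)$. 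This buys you a self-contained argument and a clean gcd formulation of (ii) and (iii), at the cost of having to verify the $q$-series identities that Merca's cited results already encapsulate. For the converse directions your counterexamples are essentially the paper's: for (ii) and (iii) the paper also evaluates the convolutions at $n=3$ (getting $6$ and $4$; your $C(2)=4$ is an equally valid variant), and concludes failure unless $m\mid 6$, resp.\ $m\mid 4$. For (i) the paper argues mod $2$ via the fact that $\sigodd(n)$ is odd exactly when $n$ is a square or twice a square, reducing the sum to counting representations $n=\ell^2+P_m(k)$ and $n=2\ell^2+P_m(k)$ at $n=3$ (for $m\ge 7$ and $m=3$) and $n=4$ (for $m=4$); you instead evaluate the $\sigodd$-convolution directly at $n=3$ for $m\ge 7$, use the symmetry $P_3(k)=P_3(-1-k)$ to see the whole sum is even for $m=3$, and use $n=2$ for $m=4$ --- all correct, and arguably more elementary than the paper's parity criterion. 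Two small points you should make explicit in a write-up: each of the values $0,1$ is attained exactly once by $P_m$ (for $m\ge 7$) so that the sum at $n=3$ really is $\sigodd(3)+\sigodd(2)$, and the trivial modulus $m=1$ must be excluded by convention in (ii) and (iii), exactly as the paper implicitly does when it passes from ``$6\equiv 0\pmod m$'' to $m\in\{2,3,6\}$.
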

\noindent Merca showed the if condition for each of these conjectures. Using his work, we obtain the following theorem:

\begin{thm}
Merca's conjectures are true.
\end{thm}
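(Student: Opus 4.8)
The plan is to build on what is already known: Merca has proved the \emph{if} direction of each of (i)--(iii), so it remains only to establish the \emph{only if} direction, i.e.\ that whenever $m$ lies outside the set $\{5,6\}$, $\{2,3,6\}$, or $\{2,4\}$ respectively, the corresponding congruence \eqref{eq: conj1}, \eqref{eq: conj2}, or \eqref{eq: conj3} fails for some $n\in\Z^+$. I would establish each such failure by exhibiting an explicit counterexample at a small value of $n$, using only the smallest generalized polygonal numbers $P_m(0)=0$, $P_m(1)=1$, $P_m(-1)=m-3$, $P_m(2)=m$, and the fact that the convolution sums have few terms when $n$ is small.

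The congruences \eqref{eq: conj2} and \eqref{eq: conj3} are the easiest, since there the convolution is taken against $P_5$ rather than $P_m$, so the sum depends only on $n$ and a single $n$ constrains every $m$ at once. From $0=P_5(0)$, $1=P_5(1)$, $2=P_5(-1)$ one computes
\[
\sum_{k=-\infty}^{\infty}\sigodd(3-P_5(k))=\sigodd(3)+\sigodd(2)+\sigodd(1)=6,
\]
while $3$ is not a generalized pentagonal number; hence \eqref{eq: conj2} forces $m\mid 6$ and fails for every $m\notin\{1,2,3,6\}$. Similarly
\[
\sum_{k=-\infty}^{\infty}(-1)^{P_3(-k)}\sigodd(2-P_5(k))=\sigodd(2)+\sigodd(1)-\sigodd(0)=2,
\]
whereas $2=P_5(-1)$ is pentagonal and, with $j=-1$, the sign $(-1)^{P_3(-j)}=(-1)^{P_3(1)}=-1$ makes the right-hand side of \eqref{eq: conj3} equal to $-2$; hence \eqref{eq: conj3} forces $m\mid 4$ and fails for every $m\notin\{1,2,4\}$. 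Discarding the degenerate modulus $m=1$ yields the \emph{only if} direction of (ii) and (iii).

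For \eqref{eq: conj1} the sum really depends on $m$, so I would treat a couple of small values separately and argue generically for the rest. For $m\ge 7$ and $n=3$, the only generalized $m$-gonal numbers not exceeding $3$ are $0=P_m(0)$ and $1=P_m(1)$, since the next one is $P_m(-1)=m-3\ge 4$; hence $\sum_k\sigodd(3-P_m(k))=\sigodd(3)+\sigodd(2)=5$, which is odd, while $3$ itself is not generalized $m$-gonal, so \eqref{eq: conj1} fails at $n=3$. For $m=3$ one has $\sum_k\sigodd(1-P_3(k))=2$, which is even although $1=P_3(1)$ is triangular; for $m=4$ one has $\sum_k\sigodd(2-P_4(k))=\sigodd(2)+2\,\sigodd(1)=3$, which is odd although $2$ is not a square. (Here one takes $m\ge 3$, since $P_2(k)=k$ makes the two-sided sum ill-defined; under the one-sided convention $k\ge 0$, $n=3$ is again a counterexample for $m=2$.)

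Together with Merca's \emph{if} direction, the counterexamples above prove all three conjectures. I do not anticipate a genuine obstacle: the real work --- the \emph{if} direction --- is Merca's, and the \emph{only if} direction is essentially a handful of evaluations of $\sigodd$ at $0,1,2,3$. The only points needing a little care are checking, for each chosen $n$, whether or not $n$ is a generalized $m$-gonal (respectively pentagonal) number --- immediate from the ordered list $0,1,m-3,m,\dots$ --- and, in the generic part of (i), confirming that no index beyond $k=0,1$ contributes to the short convolution, which follows from the monotonicity of $P_m$ on $\{k\ge 1\}$ and on $\{k\le -1\}$.
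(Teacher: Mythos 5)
Your counterexamples all check out, and together with Merca's ``if'' directions they do prove the theorem; the interest is in where your route coincides with or departs from the paper's. For (ii) you are identical to the paper: the witness $n=3$ with $\sum_k\sigodd(3-P_5(k))=6$ forces $m\mid 6$. For (iii) you pick $n=2=P_5(-1)$, where the signed sum is $2$ but the prescribed value is $(-1)^{P_3(1)}\cdot 2=-2$, forcing $m\mid 4$; the paper instead uses the non-pentagonal $n=3$, where the signed sum equals $4$ --- same conclusion either way. The genuine difference is in (i): the paper first passes to parity via the fact that $\sigodd(n)$ is odd exactly when $n$ is a square or twice a square, so that the convolution's parity becomes a count of representations $n=\ell^2+P_m(k)$ and $n=2\ell^2+P_m(k)$, and then finds parity failures at $n=3$ (for $m\ge 7$ and $m=3$) and $n=4$ (for $m=4$); you bypass that reduction entirely and evaluate the short convolution sums directly, using the ordered list $0,1,m-3,m,\dots$ of small generalized $m$-gonal numbers ($\sigodd(3)+\sigodd(2)=5$ at $n=3$ for $m\ge 7$; sum $2$ at $n=1=P_3(1)$ for $m=3$; sum $3$ at $n=2$ for $m=4$), which is more elementary and equally rigorous, at the cost of not exhibiting the theta-type structure the paper's reduction makes visible. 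One loose end to patch: in (i) you restrict to $m\ge 3$ with a justification only for discarding $m=2$; since the conjecture quantifies over all positive integers $m$, you should also dispose of $m=1$, which the paper does in the same breath as $m=2$ --- here $P_1(k)\to-\infty$, so the two-sided sum again has infinitely many positive terms and diverges, and the congruence \eqref{eq: conj1} cannot hold.
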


\begin{proof}
We begin by proving (ii). Merca showed that \eqref{eq: conj2} holds if $m \in \{2,3,6\}$ \cite[Theorem~3]{Merca}, hence it suffices to show that if $m \notin\{2,3,6\}$, then there exists some $n \in \Z^+$ such that $n \neq P_5(j)$ for all $j \in \Z$ and  
\begin{equation}\label{eq: conj7}
\sum_{k = -\infty}^\infty \sigodd(n-P_5(k)) \not\equiv 0 \pmod m.
\end{equation}
Since $\sigodd(n-P_5(k)) = 0$ whenever $n-P_5(k) \le 0$, the sum in \eqref{eq: conj7} is in fact finite, and we easily compute $\sum_k \sigodd(3-P_5(k)) = 6$, where $3 \ne P_5(j)$ for $j \in \Z$. Thus, \eqref{eq: conj7} holds unless $6 \equiv 0 \pmod m$. But this is the case only if $m \in \{2,3,6\}$.

Next, we prove (iii). Again, Merca proves that \eqref{eq: conj3} holds if $m \in \{2,4\}$ \cite[Theorem~4]{Merca}, hence it suffices to show that if $m \notin\{2,4\}$, then there exists some $n \in \Z^+$ such that $n\neq P_5(j)$ for all $j \in \Z$ and 
\begin{equation}\label{eq: conj8}
\sum_{k = -\infty}^\infty (-1)^{P_3(-k)} \sigodd(n-P_5(k)) \not\equiv 0 \pmod m.
\end{equation}
We compute $\sum_k (-1)^{P_3(-k)} \sigodd(3-P_5(k)) = 4$, where $3 \ne P_5(j)$ for $j \in \Z$, and so \eqref{eq: conj8} holds unless $4 \equiv 0 \pmod{m}$. But this is the case only if $m \in \{2,4\}$. 

Finally, we prove (i). Since $\sigodd(n)$ is odd if and only if $n$ is a square or a twice square (see \cite[p.~3]{Merca}), we have that 
\begin{equation}\label{eq: mod2}
\sum_{n=1}^\infty \sigodd(n)q^n \equiv \sum_{n=1}^\infty q^{n^2}+\sum_{n=1}^\infty q^{2n^2} \pmod 2.
\end{equation}
The $n$th coefficient of 
\begin{equation*}
\Big(\sum_{\ell=1}^\infty \sigodd(\ell)q^\ell\Big)\Big(\sum_{k=-\infty}^\infty q^{P_m(k)}\Big) = \sum_{\ell=1}^\infty \Big(\sum_{k=-\infty}^\infty \sigodd(\ell)q^{\ell+P_m(k)}\Big) = \sum_{n = 1}^\infty \Big(\sum_{k=-\infty}^\infty \sigodd(n-P_m(k))\Big)q^n
\end{equation*}
is given by 
$\sum_{k=-\infty}^\infty \sigodd(n-P_m(k))$.
On the other hand, the $n$th coefficient of
\begin{align*}
\Big(\sum_{\ell=1}^\infty q^{\ell^2}+\sum_{\ell=1}^\infty q^{2\ell^2}\Big)\Big(\sum_{k=-\infty}^\infty q^{P_m(k)}\Big) = \sum_{\substack{\ell \ge 1 \\ k \in \Z}} \left(q^{\ell^2+P_m(k)}+q^{2\ell^2+P_m(k)}\right)
\end{align*}
is given by 
$a_m(n)+b_m(n)$, where
\begin{align*}
a_m(n) = \abs{A_m(n)} &:= \#\{(\ell,k) \in \Z^+\times \Z:\, \ell^2+P_m(k) = n\}, \\
b_m(n) = \abs{B_m(n)} &:= \#\{(\ell,k) \in \Z^+\times \Z:\, 2\ell^2+P_m(k) = n\}.
\end{align*}
Thus, due to \eqref{eq: mod2} we must have 
\begin{equation*}
\sum_{k=-\infty}^\infty \sigodd(n-P_m(k)) \equiv a_m(n)+b_m(n) \pmod 2.
\end{equation*}

Suppose first that $m \ge 7$. Then we claim that $P_m(0) = 0, P_m(1) = 1$, and $P_m(k) > 3$ for all $k \notin \{0,1\}$. 
From \eqref{eq: m-gonal}, it is clear that $P_m(0) = 0$ and $P_m(1) = 1$. To see that $P_m(k) > 3$ for all $k \notin \{0,1\}$, note that 
since the leading term of $P_m(x)$ is positive and its vertex is at $0 < \frac{m-4}{2m-4} < 1$,
we have $P_m(k) \ge P_m(2) = m \ge 7$ for $k \ge 2$ and $P_m(k) \ge P_m(-1) = m-3 \ge 4$ for $k \le -1$.

Now, let $n=3$. Then the above shows that $n$ is not a generalized $m$-gonal number for $m \ge 7$, and so for \eqref{eq: conj1} to hold, we must have $\sum_{k=-\infty}^\infty \sigodd(3-P_m(k)) \equiv 0 \pmod 2$. If $(\ell,k) \in A_m(3)$, then $\ell^2 = 3-P_m(k)$, so that in particular $\ell^2 \le 3$, which forces $\ell = 1$. But then we must have $P_m(k) = 2$, which we have seen to be impossible. Hence, $A_m(3)$ is empty, and $a_m(3) \equiv 0 \pmod 2$. On the other hand, if $(\ell,k) \in B_m(3)$, we must again have $\ell = 1$. It follows that $P_m(k) = 1$, which is the case if and only if $k = 1$. Hence $B_m(3) = \{(1,1)\}$, and $b_m(3) \equiv 1\pmod 2$. We conclude that $\sum_{k=-\infty}^\infty \sigodd(3-P_m(k)) \equiv a_m(3)+b_m(3)\equiv 1 \not\equiv 0 \pmod 2$.

Merca showed that \eqref{eq: conj1} holds for $m \in \{5,6\}$, and for $m \in \{1,2\}$, the sum in \eqref{eq: conj1} diverges, hence it remains to consider $m \in \{3,4\}$. Suppose first that $m = 3$, and note that $P_3(k) = \frac{1}{2}(k^2+k)$. We have $3 = P_3(-3) = P_3(2)$, so for \eqref{eq: conj1} to hold, we must have $\sum_{k=-\infty}^\infty \sigodd(3-k) \equiv 3 \equiv 1 \pmod 2$. If $(\ell,k) \in A_3(3)$, then $\ell = 1$ and $P_3(k) = 2$, which is impossible. Hence $A_3(3)$ is empty. If $(\ell,k) \in B_3(3)$, then $\ell =1$ and $P_3(k) = 1$, which is the case if and only if $k \in \{-2,1\}$. Thus $B_3(3) = \{(1,-2),(1,1)\}$, and $\sum_{k=-\infty}^\infty \sigodd(3-k) \equiv a_3(3)+b_3(3) \equiv 0 \not\equiv 1 \pmod 2$. 

Finally, suppose $m = 4$, and note that $P_4(k) = k^2$. Since $4 = P_4(2)$, for \eqref{eq: conj1} to hold we must have $\sum_{k=-\infty}^\infty \sigodd(4-k) \equiv 4 \equiv 0 \pmod 2$. If $(\ell,k) \in A_4(4)$, then either $\ell = 1$ and $P_4(k) = 3$, which is impossible, or $\ell = 2$ and $P_4(k) = 0$, which is the case if and only if $k = 0$. Thus, $A_4(4) = \{(2,0)\}$.  On the other hand, if $(\ell,k) \in B_4(4)$, then $\ell =1$ and $P_3(k) = 2$, which is impossible. Thus $B_4(4)$ is empty, and $\sum_{k=-\infty}^\infty \sigodd(4-k) \equiv a_4(4)+b_4(4) \equiv 1 \not\equiv 0 \pmod 2$.
\end{proof}

\bigskip

\section*{Acknowledgements}
We would like to thank Ken Ono for suggesting this project, and for several helpful conversations. We thank William Craig and Badri Pandey, as well as the referee, for their comments on the exposition in this note. Finally, we are grateful for the generous support of the National Science Foundation (DMS 2002265 and DMS 205118), the National Security Agency (H98230-21-1-0059), the Thomas Jefferson Fund at the University of Virginia, and the Templeton World Charity Foundation.

\bigskip

\bigskip

\end{document}